\documentclass[10pt]{amsart}
\usepackage{amsthm}
\usepackage{tikz-cd} 
\usetikzlibrary{cd}
\usepackage{amsmath,amssymb} 
\usepackage{graphicx} 
\usepackage{geometry}
\usepackage[colorlinks,linkcolor=red,anchorcolor=blue,citecolor=green]{hyperref}
\geometry{a4paper}

\newtheorem{theorem}{Theorem}[section]
\newtheorem{corollary}[theorem]{Corollary}

\newtheorem{proposition}[theorem]{Proposition}
\theoremstyle{definition}
\newtheorem{definition}[theorem]{Definition}
\newtheorem{remark}[theorem]{Remark}
\newtheorem{example}[theorem]{Example}
\newtheorem{question}[theorem]{Question}
\numberwithin{equation}{section}

\usepackage{indentfirst}
\title{Fundamental Generalized Legendrian rack and Classical invariants}
\author{Zhiyun Cheng}
\address{School of Mathematical Sciences, Laboratory of Mathematics and Complex Systems, MOE, Beijing Normal University, Beijing 100875, China}
\email{czy@bnu.edu.cn}
\author{Zhiyi He}
\address{School of Mathematical Sciences, Beijing Normal University, Beijing 100875, China}
\email{202321130080@mail.bnu.edu.cn}
\subjclass[2020]{57K12}
\keywords{Legendrian knot, classical invariants, fundamental GL-rack}
\begin{document}
\begin{abstract}
In this paper, we prove that if two Legendrian knots have isomorphic fundamental GL-racks, then either they have the same Thurston-Bennequin number and the same rotation number, or they have the opposite Thurston-Bennequin numbers and opposite rotation numbers.
\end{abstract}

\maketitle

\section{Introduction}

Quandles and racks are non-associative algebraic structures related to knots and links subject to several axioms derived from Reidemeister moves. In 1982, the notion of quandle was independently introduced by Joyce \cite{Joyce-1982} and Matveev \cite{Matveev-1984}. For a given knot, similar to the knot group, one can define the concept of knot quandle, which was proved to be an almost complete knot invariant. As an extension of quandle, the notion of rack was introduced by Fenn and Rourke \cite{Fenn-Rourke-1992} in 1992 to study framed knots in 3-manifolds. Many knot invariants can be derived from quandles. For example, by counting the homomorphisms from the knot quandle $Q_K$ to a finite quandle $X$, one obtains the associated coloring invariant Col$_X(K)$. A (co)homology theory for quandle was proposed by Carter et al in \cite{state-sum-cocycle-invariant}, and 2-cocycles and 3-cocycles can be used to define state-sum invariants for knots and knotted surfaces, respectively. During the past thirty years, a close connection has been established between quandle theory and Hopf algebras \cite{From_rack_to_pointed_Hopf_algebras}, mapping class group \cite{On_relations_and_homology_of_the_Dehn_quandle}, Yang-Baxter equation \cite{YBE-FA}, and topological spaces \cite{RUBINSZTEIN-RYSZARD}. The reader is referred to \cite{survey-developments,book} for more details and recent developments.

Recently, quandle theory is used to study Legendrian knots by equipped with some extra data. In 2017, Kulkarni and Prathamesh \cite{On-Rack-Invariants-2017} first introduced rack invariants of Legendrian knots. They introduced a family of rack invariants and used them to distinguish the Legendrian trivial knots. In 2021, Ceniceros, Elhamdadi and Nelson \cite{Legendrian-Rack-2021} introduced the Legendrian rack motivated by the front-projection Reidemeister moves for Legendrian knots. The key idea is to associate an automorphism $f$ to each cusp. The resulting counting invariant of Legendrian racks can be used to distinguish some Legendrian knots from their stabilization. Later, the notion of generalized Legendrian rack (or GL-rack for the sake of simplicity), which is an enhanced version of Legendrian rack, was introduced by Kimura \cite{Bi-Legendrian-2023} and Karmakar-Starf-Singh \cite{karmakar2024generalisedlegendrianrackslegendrian} independently. The key to this concept lies in distinguishing between the up cusp and the down cusp. The corresponding invariant of GL-racks can distinguish infinitely many oriented Legendrian unknots and trefoil knots.

In \cite{Bi-Legendrian-2023}, Kimura proved that if two Legendrian knots have the same classical invariants, i.e. the same topological knot type, the same Thurston-Bennequin number and the same rotation number, then for any finite GL-quandle, they share the same coloring number. The main result of this paper is to present the following relationship between the fundamental GL-rack and classical invariants of Legendrian knots, which can be regarded as a partial converse of Kimura's result.

\begin{theorem}\label{theorem1}
Let $K_1, K_2$ be two Legendrian knots with isomorphic fundamental GL-racks, then either $tb(K_1)=tb(K_2)$ and $rot(K_1)=rot(K_2)$, or $tb(K_1)=-tb(K_2)$ and $rot(K_1)=-rot(K_2)$.
\end{theorem}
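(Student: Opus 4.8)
The plan is to recover the pair $(tb,rot)$ from the fundamental GL-rack by a single ``linearization'' of the rack structure, and then to check that the only information such a linearization cannot see is one global sign — which is exactly what produces the two cases in the statement. Concretely, I would start from front diagrams $D_1,D_2$ of $K_1,K_2$ and the combinatorial presentation of the fundamental GL-rack: the generators are the arcs of $D_i$ (cut at undercrossings and at cusps), at each crossing there is a rack relation $c=a\triangleright^{\varepsilon}b$ with $\varepsilon$ the sign of the crossing and $b$ the overarc, and at each up (resp.\ down) cusp there is a relation $b=u^{\pm1}(a)$ (resp.\ $b=d^{\pm1}(a)$), the exponent dictated by the orientation. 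Recall that an isomorphism $\Phi\colon GL(K_1)\xrightarrow{\ \sim\ }GL(K_2)$ is, by definition, a bijection intertwining $\triangleright$, $u$ and $d$; so any construction that is functorial in the GL-rack is transported by $\Phi$.

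The heart of the argument is to pass to a canonical ``abelian'' quotient $GL(K)\twoheadrightarrow GL(K)^{\mathrm{ab}}$ whose underlying rack is affine, i.e.\ supported on an abelian group $A$ with operation $x\triangleright y=x+\kappa$ for a fixed $\kappa\in A$ and with $u,d$ acting as commuting translations, taking $A$ and these data to be universal (I expect this to be, essentially, the free abelian GL-rack on one generator). Feeding the presentation above into this target, every arc acquires a value in $A$: the value changes by $\varepsilon\kappa$ across a crossing of sign $\varepsilon$ and by the fixed translation attached to $u$ or $d$ across each cusp, so walking once around $K$ yields a single relation in $A$ among $\kappa$, the writhe $w(D)$, the up- and down-cusp counts $c_u(D),c_d(D)$, and the translation vectors of $u$ and $d$. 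Substituting $tb(K)=w(D)-\tfrac12\bigl(c_u(D)+c_d(D)\bigr)$ and $rot(K)=\tfrac12\bigl(c_d(D)-c_u(D)\bigr)$, this relation presents $GL(K)^{\mathrm{ab}}$ and exhibits inside it a canonical element — a ``fundamental class'' — whose coordinates are precisely $tb(K)$ and $rot(K)$. By functoriality $\Phi$ induces an isomorphism of pointed GL-racks $GL(K_1)^{\mathrm{ab}}\xrightarrow{\ \sim\ }GL(K_2)^{\mathrm{ab}}$ carrying one fundamental class to the other, so $(tb(K_1),rot(K_1))$ and $(tb(K_2),rot(K_2))$ differ by an automorphism of this universal target.

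It then remains to compute that automorphism group. An automorphism of the universal affine GL-rack must commute simultaneously with $\triangleright$ — which already pins down the generator $\kappa$ up to sign — and with the translations representing $u$ and $d$; hence the group is just $\{\pm1\}$, acting by a single global sign. Therefore $(tb(K_1),rot(K_1))=\pm(tb(K_2),rot(K_2))$, that is, either $tb(K_1)=tb(K_2)$ and $rot(K_1)=rot(K_2)$, or $tb(K_1)=-tb(K_2)$ and $rot(K_1)=-rot(K_2)$, as claimed. Geometrically the ``$-$'' case is the one realized by the orientation-reversing contactomorphism of $(\mathbb{R}^{3},\xi_{\mathrm{std}})$, which sends $(tb,rot)$ to $(-tb,-rot)$ and leaves the fundamental GL-rack unchanged up to isomorphism.

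The main obstacle lies in the middle step. One must make the quotient $GL(K)\to GL(K)^{\mathrm{ab}}$ precise and, crucially, use the cusp axioms of a GL-rack — not merely that $u,d$ are commuting rack automorphisms — to show the universal affine target is rigid enough that its only pointed automorphism is the global sign; in particular that no automorphism fixes $tb$ while negating $rot$, which is what would wrongly allow $tb(K_1)=tb(K_2)$ together with $rot(K_1)=-rot(K_2)$. Hand in hand with this, one must fix the orientation-dependent signs at up and down cusps so that their contributions to the bookkeeping assemble into $rot(K)$ with the sign correctly correlated to that of $tb(K)$. This entanglement of the rack operation with the two cusp automorphisms — the reason the residual sign is a single diagonal one rather than two independent ones — is where I expect the real work of the proof to be.
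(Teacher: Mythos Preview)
Your linearization idea is the right one, and it is essentially what the paper does: map the fundamental GL-rack to permutation (affine) GL-racks in which $x\ast y$, $u$ and $d$ all act by translations, and read off the ``holonomy'' of one trip around the knot. But your packaging of the endgame contains a genuine gap. First, the ``fundamental class'' with coordinates $(tb,rot)$ is not an element \emph{inside} $GL(K)^{\mathrm{ab}}$; after a change of basis it is the generator of the kernel of the surjection from the free one-generator affine GL-rack $F\cong\mathbb{Z}^{2}$ onto $GL(K)^{\mathrm{ab}}$, so it is zero in the quotient. Second, the GL-rack automorphism group of $F$ (or of any connected affine GL-rack) is not $\{\pm1\}$: every structure map is a translation, so every GL-rack automorphism is itself a translation, and the pointed automorphism group is trivial. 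Your assertion that commuting with $\triangleright$ ``pins down $\kappa$ up to sign'' is simply false for permutation racks --- it pins down $\kappa$ exactly. The honest source of the sign is elsewhere: a GL-rack isomorphism $GL(K_1)^{\mathrm{ab}}\to GL(K_2)^{\mathrm{ab}}$ between connected affine GL-racks must be a translation, which forces the two kernels in $F$ to coincide as subgroups; a rank-one subgroup of $\mathbb{Z}^{2}$ determines its generator only up to sign, and \emph{that} is where the $\pm$ enters. So the step ``compute the automorphism group and find $\{\pm1\}$'' does not work as written, though the kernel argument just sketched does.

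The paper implements the same linearization concretely rather than universally. It maps to the finite permutation GL-racks $(\mathbb{Z}_{k},\ast,u,d)$ with $x\ast y=\sigma(x)$ and $ud=\sigma^{-1}$, for $\sigma$ the $k$-cycle, and uses equality of coloring numbers. Writing $\omega$ for the writhe and $p,q$ for the numbers of up- and down-cusps, the three choices $(u,d)=(\sigma^{-1},\mathrm{id})$, $(\mathrm{id},\sigma^{-1})$, $(\sigma^{-2},\sigma)$ make the existence of a $\mathbb{Z}_{k}$-coloring equivalent to $k\mid(\omega-p)$, $k\mid(\omega-q)$, $k\mid(\omega-2p+q)$ respectively; varying $k$ therefore extracts $|\omega-p|$, $|\omega-q|$, $|\omega-2p+q|$ as GL-rack invariants, and these three absolute values force the pair $(\omega-p,\omega-q)$ for $K_1$ and $K_2$ to agree up to a single overall sign, which yields the dichotomy for $(tb,rot)$. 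This trades your single universal target for a family of finite test objects and a coloring-count argument; once your kernel picture is stated correctly the two routes are equivalent, but the paper's version sidesteps the automorphism-group issue entirely.
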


The remainder of this paper is arranged as follows. In Section \ref{section2} we take a quick review of Legendrian knot theory, including the classical invariants, GL-rack, and fundamental GL-rack of Legendrian knots. Section 3 is devoted to give the proof of Theorem \ref{theorem1}.

\section{Legendrian knots and Fundamental GL-rack}\label{section2}
\subsection{Legendrian knots}
In this section, we review some basics of Legendrian knots. A \textit{contact structure} on an oriented 3-manifold $M$ is a completely non-integrable plane field $\xi$ in the tangent bundle of $M$. A 1-form $\alpha$ on $M$ is called a \textit{contact form} if $\alpha\wedge d\alpha$ is a volume form. A manifold $M$ equipped with a contact structure $\xi$ is called a \textit{contact manifold}, denoted by $(M,\xi)$. For a given contact form $\alpha$, the kernel of it defines a contact structure $\xi=\ker\alpha$. Note that two contact forms $\alpha$ and $f\alpha$ give rise to the same contact structure, provided that $f$ is a smooth non-vanishing function on $M$.

\begin{example}
Let $(x,y,z)$ be the standard coordinates on $\mathbb{R}^3$, consider the contact form $\alpha=dz-ydx$ and the corresponding contact structure $\xi_{std} =\ker\alpha=\text{span}\{\frac{\partial}{\partial y}, \frac{\partial}{\partial x}+y\frac{\partial}{\partial z}\}$. Then $\xi_{std}$ is a contact structure on $\mathbb{R}^3$, called the \textit{standard contact structure} on $\mathbb{R}^3$.
\end{example}

According to Darboux’s Theorem, any point in a contact manifold $M$ has an open neighborhood that is diffeomorphic to an open neighborhood of the origin in $\mathbb{R}^3$ by a diffeomorphism that takes the contact structure on $M$ to the standard contact structure $\xi_{std}$ on $\mathbb{R}^3$. From now on, we focus our attention on the standard contact 3-manifold $(\mathbb{R}^3, \xi_{std})$.

\begin{definition}
A \textit{Legendrian knot} $K$ in $(\mathbb{R}^3, \xi_{std})$ is a smoothly embedded $S^1$ that is always tangent to $\xi$, i.e. $T_xK\in\xi_x, \forall x\in K$.
\end{definition}

Let $\phi(\theta) = (x(\theta),y(\theta),z(\theta))$ be a parameterization of a Legendrian knot in $(\mathbb{R}^3,\xi_{std})$. Then $\phi'(\theta) = (x'(\theta),y'(\theta),z'(\theta)) \in \xi_{\phi(\theta)} $, it follows that $z'(\theta)-yx'(\theta)=0$. There are two projections of Legendrian knots: the \textit{front projection} \[\pi: \mathbb{R}^3 \rightarrow \mathbb{R}^2: (x,y,z) \mapsto (x,z),\] and the \textit{Lagrangian projection} \[\pi: \mathbb{R}^3 \rightarrow \mathbb{R}^2: (x,y,z) \mapsto (x,y).\]
Consider the front projection, since $y=\frac{z'(\theta)}{x'(\theta)}$, we obtain the following two facts:
\begin{enumerate}
\item front projection has no vertical tangencies;
\item at each crossing point, the slope of the overcrossing is more negative than the undercrossing, see Figure \ref{fig2}.
\end{enumerate}

\tikzset{every picture/.style={line width=0.75pt}} 
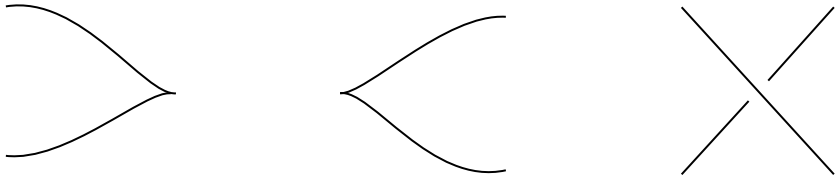
\begin{figure}[h]
\begin{tikzpicture}[x=0.75pt,y=0.75pt,yscale=-1,xscale=1]

\draw    (236,79.53) .. controls (273.59,73.27) and (305.21,124.62) .. (320.89,123.25) ;
\draw    (236,154.68) .. controls (266.04,158.09) and (305.55,121.76) .. (318.27,123.25) ;
\draw    (485.57,161.95) .. controls (448.33,170.2) and (418.42,120.99) .. (402.83,123.18) ;
\draw    (485.57,84.75) .. controls (455.4,82.93) and (415.61,124) .. (402.83,123.18) ;
\draw    (573.33,79.94) -- (649.33,163.97) ;
\draw    (649.33,79.94) -- (616.55,116.86) ;
\draw    (606.76,127.11) -- (573.33,164) ;
\end{tikzpicture}
\caption{cusps and crossing}
\label{fig2}
\end{figure}

\begin{definition}
Two Legendrian knots $K_1, K_2$ are called \textit{Legendrian equivalent} if there exists an isotopy through Legendrian knots in $(\mathbb{R}^3,\xi_{std})$ from $K_1$ to $K_2$.
\end{definition}

Obviously, if two Legendrian knots are Legendrian equivalent, they must be of the same topological knot type. The three local moves of front diagrams shown in Figure \ref{fig3} are called \textit{Legendrian Reidemeister moves}. It is well known that two front diagrams represent the same Legendrian knot if and only if they are related by a sequence of Legendrian Reidemeister moves.

\tikzset{every picture/.style={line width=0.75pt}} 
\begin{figure}[h]
\begin{tikzpicture}[x=0.75pt,y=0.75pt,yscale=-1,xscale=1]

\draw    (316.9,70.73) .. controls (369.06,48.61) and (367.99,48.61) .. (421.74,70.27) ;
\draw    (495.19,58.75) .. controls (504.77,60.13) and (520.2,46.3) .. (525.52,42.61) .. controls (530.85,38.93) and (549.47,31.09) .. (564.91,44) .. controls (580.34,56.9) and (593.11,60.59) .. (599.5,60.59) ;
\draw    (495.19,58.75) .. controls (510.09,61.51) and (566.15,71.41) .. (598.52,102.54) ;
\draw    (552.13,70.27) .. controls (565.97,61.51) and (591.52,62.44) .. (599.5,60.59) ;
\draw    (499.45,102.54) .. controls (506.9,95.62) and (523.39,79.95) .. (540.96,75.34) ;
\draw    (457.91,67.87) -- (485.16,67.96) ;
\draw [shift={(487.16,67.97)}, rotate = 180.19] [color={rgb, 255:red, 0; green, 0; blue, 0 }  ][line width=0.75]    (10.93,-3.29) .. controls (6.95,-1.4) and (3.31,-0.3) .. (0,0) .. controls (3.31,0.3) and (6.95,1.4) .. (10.93,3.29)   ;
\draw    (457.91,67.87) -- (434.14,68.42) ;
\draw [shift={(432.14,68.47)}, rotate = 358.67] [color={rgb, 255:red, 0; green, 0; blue, 0 }  ][line width=0.75]    (10.93,-3.29) .. controls (6.95,-1.4) and (3.31,-0.3) .. (0,0) .. controls (3.31,0.3) and (6.95,1.4) .. (10.93,3.29)   ;
\draw    (316.86,123.36) .. controls (349.58,119.64) and (377.11,150.09) .. (390.76,149.28) ;
\draw    (316.86,167.92) .. controls (343.01,169.94) and (377.41,148.4) .. (388.48,149.28) ;
\draw    (396.36,120.98) -- (428.62,177.37) ;
\draw    (546.44,134.02) .. controls (551.6,136.19) and (583.8,154.71) .. (594.14,154.17) ;
\draw    (564.57,161.1) .. controls (568.93,159.6) and (585.95,153.8) .. (591.29,154.17) ;
\draw    (529.26,117.75) -- (569.71,179.67) ;
\draw    (500.1,127.84) .. controls (510.13,126.84) and (519.77,128.08) .. (528.87,130.52) ;
\draw    (499.41,174.1) .. controls (514.92,175.15) and (532.74,170.41) .. (548.69,165.23) ;
\draw    (455.75,147.62) -- (482.98,148.06) ;
\draw [shift={(484.98,148.09)}, rotate = 180.93] [color={rgb, 255:red, 0; green, 0; blue, 0 }  ][line width=0.75]    (10.93,-3.29) .. controls (6.95,-1.4) and (3.31,-0.3) .. (0,0) .. controls (3.31,0.3) and (6.95,1.4) .. (10.93,3.29)   ;
\draw    (455.75,147.62) -- (431.98,148.17) ;
\draw [shift={(429.98,148.22)}, rotate = 358.67] [color={rgb, 255:red, 0; green, 0; blue, 0 }  ][line width=0.75]    (10.93,-3.29) .. controls (6.95,-1.4) and (3.31,-0.3) .. (0,0) .. controls (3.31,0.3) and (6.95,1.4) .. (10.93,3.29)   ;
\draw    (316.9,199.43) -- (411.66,276.62) ;
\draw    (389.64,217.7) -- (370.79,233.35) ;
\draw    (358.58,242.76) -- (316.9,276.65) ;
\draw    (342.81,213.65) .. controls (345.56,211.73) and (367.21,185.28) .. (412.59,232.3) ;
\draw    (411.59,199.47) -- (395.89,212.51) ;
\draw    (320.4,228.56) .. controls (326.36,224.83) and (331.41,221.51) .. (335.75,218.57) ;
\draw    (538.11,242.78) -- (517.5,259.53) ;
\draw    (495.87,199.44) -- (590.63,276.64) ;
\draw    (568.61,217.72) -- (549.76,233.36) ;
\draw    (509.04,265.49) -- (495.3,276.67) ;
\draw    (590.56,199.49) -- (568.61,217.72) ;
\draw    (562.48,262.29) .. controls (559.74,264.21) and (538.18,290.7) .. (492.64,243.79) ;
\draw    (572.24,254.58) .. controls (578,251.29) and (583.05,245.76) .. (585.9,240.42) ;
\draw    (448.92,227.33) -- (477.74,227.78) ;
\draw [shift={(479.74,227.81)}, rotate = 180.88] [color={rgb, 255:red, 0; green, 0; blue, 0 }  ][line width=0.75]    (10.93,-3.29) .. controls (6.95,-1.4) and (3.31,-0.3) .. (0,0) .. controls (3.31,0.3) and (6.95,1.4) .. (10.93,3.29)   ;
\draw    (448.92,227.33) -- (423.74,227.89) ;
\draw [shift={(421.74,227.93)}, rotate = 358.74] [color={rgb, 255:red, 0; green, 0; blue, 0 }  ][line width=0.75]    (10.93,-3.29) .. controls (6.95,-1.4) and (3.31,-0.3) .. (0,0) .. controls (3.31,0.3) and (6.95,1.4) .. (10.93,3.29)   ;
\end{tikzpicture}
\caption{Legendrian Reidemeister moves}
\label{fig3}
\end{figure}
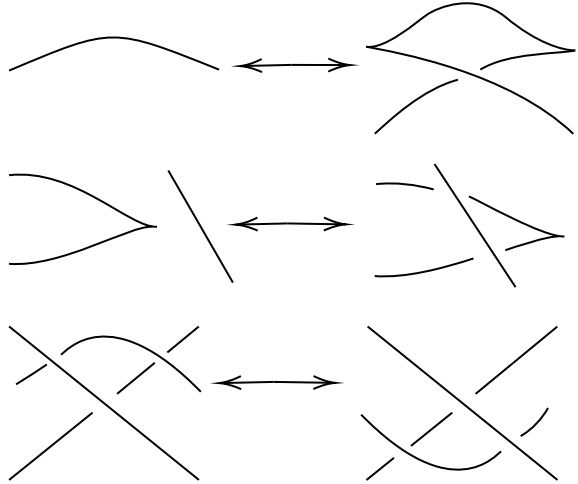

In order to distinguish different Legendrian knots, many invariants have been studied in the literature. Here we briefly review the classical invariants of Legendrian knots. The reader is referred to \cite{Handbook-Legendrian-transversal} for more details and recent progress.

\begin{itemize}
\item \textbf{Topological knot type}. Obviously, the underlying topological knot type of a Legendrian knot $K$ is an invariant, which is usually denoted by $k(K)$. For the set of all the Legendrian knots with the same underlying topological knot type $\mathcal{K}$, let us use $\mathcal{L}(\mathcal{K})$ to denote it.
\item \textbf{Thurston-Bennequin number}. For a Legendrian knot $K$, the \textit{Thurston-Bennequin number} $tb(K)$ measures the number of twistings of $\xi$ around $K$. It can be read from the crossings and cusps of its oriented front projection $D(K)$ as follows
\begin{center}
$tb(K)=w(D(K))-\frac{1}{2} (\text{number of cusps})$,
\end{center}
where $w(D(K))$ denotes the writhe of $D(K)$.
\item \textbf{Rotation number}. For a Legendrian knot $K$, the \textit{rotation number} $rot(K)$ can be regarded as the winding number of a nonzero tangent vector of $K$ in the fiber of a trivialization $\xi|_K=K\times\mathbb{R}^2$, induced by a Seifert surface of $K$. It also can be directly read from its oriented front diagram $D(K)$ as follows
\begin{center}
$rot(K)=\frac{1}{2}(D-U)$,
\end{center}
where $D$ and $U$ denote the number of down cusps and up cusps, respectively.
\end{itemize}

The pair of the Thurston-Bennequin number and the rotation number $\{tb(K), rot(K)\}$ completely classifies the Legendrian isotopy classes of some knot types $\mathcal{K}$, such as the unknot \cite{MR1619122,Eliashberg-Yakov-Fraser-Maia}, the figure eight knot and torus knots \cite{8-tours-knot}.

Given a Legendrian knot $K$, one can obtain some other Legendrian knots with the same topological knot type by stabilization $S_{\pm}$, see Figure \ref{fig4}. It is easy to find that 
\begin{center}
$tb(S_{\pm}(K))=tb(K)-1$
\end{center}
and 
\begin{center}
$rot(S_{\pm}(K))=rot(K)\pm1$.
\end{center}
Notice that $S_{\pm}(K)$ does not depend on the place where the stabilization is done. Stabilization plays an important role in Legendrian knot theory, since two Legendrian knots with the same topological knot type are Legendrian isotopic after each has been stabilized several times.

\tikzset{every picture/.style={line width=0.75pt}} 
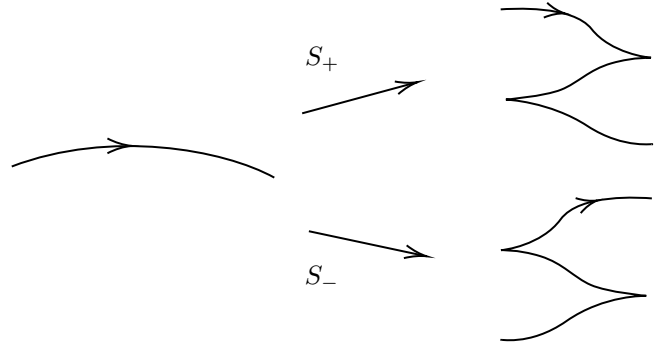
\begin{figure}[h]
\begin{tikzpicture}[x=0.75pt,y=0.75pt,yscale=-1,xscale=1]

\draw    (550.72,85.58) .. controls (556.44,85.21) and (586.58,83.3) .. (596.11,95.47) .. controls (605.65,107.64) and (624.51,109.91) .. (625.78,109.91) .. controls (627.05,109.91) and (617.47,109.37) .. (606.03,112.69) .. controls (594.59,116) and (589.47,122.53) .. (580.92,125.96) .. controls (572.37,129.39) and (553.31,130.93) .. (553.31,130.93) .. controls (553.31,130.93) and (575.16,131.84) .. (593.99,144.21) .. controls (612.83,156.57) and (628.26,152.63) .. (626.14,153.55) ;
\draw   (573.93,82.24) .. controls (576.69,84.86) and (579.65,86.64) .. (582.78,87.61) .. controls (579.46,87.47) and (575.95,88.18) .. (572.25,89.7) ;
\draw    (626.19,180.66) .. controls (620.46,180.24) and (590.27,178.07) .. (580.65,191.19) .. controls (571.04,204.31) and (552.13,206.7) .. (550.86,206.69) .. controls (549.58,206.69) and (559.18,206.13) .. (570.64,209.76) .. controls (582.09,213.39) and (587.18,220.45) .. (595.74,224.2) .. controls (604.3,227.94) and (623.39,229.66) .. (623.39,229.66) .. controls (623.39,229.66) and (602.52,229.04) .. (583.58,242.34) .. controls (564.65,255.64) and (549.1,250.98) .. (551.22,251.98) ;
\draw   (588.14,180.51) .. controls (591.99,181.74) and (595.58,182.14) .. (598.88,181.69) .. controls (595.86,182.98) and (593.12,185.09) .. (590.67,188.06) ;
\draw    (451.63,137.87) -- (507.35,122.67) ;
\draw [shift={(509.28,122.15)}, rotate = 164.75] [color={rgb, 255:red, 0; green, 0; blue, 0 }  ][line width=0.75]    (10.93,-3.29) .. controls (6.95,-1.4) and (3.31,-0.3) .. (0,0) .. controls (3.31,0.3) and (6.95,1.4) .. (10.93,3.29)   ;
\draw    (454.98,197.16) -- (511.82,209.36) ;
\draw [shift={(513.78,209.78)}, rotate = 192.11] [color={rgb, 255:red, 0; green, 0; blue, 0 }  ][line width=0.75]    (10.93,-3.29) .. controls (6.95,-1.4) and (3.31,-0.3) .. (0,0) .. controls (3.31,0.3) and (6.95,1.4) .. (10.93,3.29)   ;
\draw    (306.67,164.61) .. controls (355.95,145.71) and (412.42,156.09) .. (437.72,170.2) ;
\draw   (354.42,150.5) .. controls (358.42,152.57) and (362.42,153.82) .. (366.41,154.23) .. controls (362.42,154.64) and (358.42,155.89) .. (354.42,157.96) ;

\draw (451.7,102.91) node [anchor=north west][inner sep=0.75pt]   [align=left] {$\displaystyle S_{+}$};
\draw (451.44,213.1) node [anchor=north west][inner sep=0.75pt]   [align=left] {$\displaystyle S_{-}$};
\end{tikzpicture}
\caption{Stabilization}
\label{fig4}
\end{figure}

\subsection{Fundamental GL-rack}
Recall that a \textit{rack} is a pair $(X, \ast)$ where $X$ is a set and $\ast$ is a binary operation satisfying the following axioms:
\begin{enumerate}
\item the map $s_x: X\rightarrow X$, where $s_x(y)=y\ast x$ is a bijection for all $x\in X$;
\item $(x\ast y)\ast z=(x\ast z)\ast(y\ast z)$ for all $x, y, z\in X$.
\end{enumerate}
A \textit{quandle} is a rack satisfying that $x\ast x = x$ for all $x\in X$. These three axioms correspond to the Reidemeister moves in knot theory.

\begin{example}
Here we list some examples of rack:
\begin{enumerate}
\item Any set $X$ can be regarded as a quandle if we set $x\ast y=x$ for all $x, y\in X$.
\item Let $G$ be a group, we set $x\ast_1 y=y^{-1}xy$ and $x\ast_2 y=yx^{-1}y$, then $(G, \ast_1)$ and $(G, \ast_2)$ are called the \textit{conjugation quandle} and \textit{core quandle} associated to $G$, respectively.
\item Let $X$ be a set, $(X,\ast_{\sigma})$ is called the \textit{permutation rack} if $x \ast_{\sigma} y = \sigma(x)$ for all $x,y \in X$, where $\sigma$ is a fixed permutation on $X$. Clearly, a permutation rack $(X, \ast_{\sigma})$ is a quandle if and only if $\sigma = id_X$.
\end{enumerate}
\end{example}

\begin{definition}\cite{karmakar2024generalisedlegendrianrackslegendrian,Bi-Legendrian-2023}
A \textit{generalized Legendrian rack} (GL-rack) is a quadruple $(X, \ast, u, d)$, where $(X, \ast)$ is a rack and $u, d$ are two maps on $X$ subject to the following conditions:
\begin{enumerate}
\item $ud(x\ast x)=du(x\ast x)=x$;
\item $u(x\ast y)=u(x)\ast y$, $d(x\ast y)=d(x)\ast y$;
\item $x\ast u(y)=x\ast d(y)=x\ast y$.
\end{enumerate}
\end{definition}

Note that if $(X,\ast, u, d)$ is a GL-rack with $u=d$, then it degenerates into the notion of \textit{Legendrian rack} introduced in \cite{Legendrian-Rack-2021}. There are some facts about GL-rack.

\begin{proposition}\cite{karmakar2024generalisedlegendrianrackslegendrian}
Let $(X,\ast, u, d)$ be a GL-rack, then the following hold:
\begin{enumerate}
\item $(X,\ast)$ is a quandle if and only if $ud=du=id_X$;
\item Both the maps $u$ and $d$ are automorphisms of the underlying rack $(X, \ast)$;
\item $ud(x)\ast x=du(x)\ast x=x$ for all $x \in X$. In particular, $ud(x)=du(x), \forall x\in X$;
\item $u(x\ast^{-1} y)=u(x)\ast^{-1}y$ and $d(x\ast^{-1}y)=d(x)\ast^{-1}y$ for all $x, y\in X$.
\end{enumerate}
\end{proposition}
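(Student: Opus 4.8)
The plan is to derive everything from the three GL-rack axioms together with the left-cancellation property of a rack, namely that each right-translation $s_x\colon y\mapsto y\ast x$ is a bijection; throughout I will use its inverse, writing $y\ast^{-1}x$ for $s_x^{-1}(y)$. The logical backbone is a single computation that pushes $u$ and $d$ through a self-product. Starting from axiom (1), $x=ud(x\ast x)$, I would apply axiom (2) twice—first $d(x\ast x)=d(x)\ast x$, then $u(d(x)\ast x)=u(d(x))\ast x$—to obtain the identity $ud(x)\ast x=x$, and symmetrically $du(x)\ast x=x$. These two identities are the engine of the whole proposition, so I would establish them first.

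Part (3) is then immediate: both $ud(x)$ and $du(x)$ are sent to $x$ by the bijection $s_x$, so cancelling $x$ on the right yields $ud(x)=du(x)$. Part (1) follows by the same cancellation. If $(X,\ast)$ is a quandle then $x\ast x=x$, so $ud(x)\ast x=x=x\ast x$ forces $ud(x)=x$, and likewise $du=\mathrm{id}$; conversely, if $ud=du=\mathrm{id}$ then axiom (1) reads $x\ast x=ud(x\ast x)=x$, so $X$ is a quandle. For the endomorphism half of part (2), I would note that axiom (2) gives $u(x\ast y)=u(x)\ast y$, while axiom (3) applied with first entry $u(x)$ gives $u(x)\ast y=u(x)\ast u(y)$; composing shows that $u$, and symmetrically $d$, preserves $\ast$.

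The main obstacle is the bijectivity claim in part (2), since $X$ may be infinite and the self-product map $t\colon x\mapsto x\ast x$ is not visibly invertible. Surjectivity of $u$ and $d$ is free—axiom (1) exhibits every $x$ as $u\bigl(d(x\ast x)\bigr)$ and as $d\bigl(u(x\ast x)\bigr)$—but injectivity requires an explicit inverse. The key idea is to test the candidate $d\circ t$ against $u$ on both sides. On one side, $u\bigl(d(t(x))\bigr)=u(d(x)\ast x)=ud(x)\ast x=x$ using axiom (2) and the engine identity. On the other side one must compute $d\bigl(t(u(x))\bigr)=d\bigl(u(x)\ast u(x)\bigr)$, and here the crucial move is axiom (3), which collapses $u(x)\ast u(x)$ to $u(x)\ast x$; then axiom (2) and the engine identity again give $d(u(x))\ast x=x$. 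Hence $d\circ t$ is a two-sided inverse of $u$, and symmetrically $u\circ t$ inverts $d$, so both are rack automorphisms. I expect the delicate point to be bookkeeping the order of composition in the left-inverse computation and invoking axiom (3) in the correct direction.

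Finally, part (4) is a one-line substitution: replacing $x$ by $x\ast^{-1}y$ in axiom (2) gives $u(x)=u(x\ast^{-1}y)\ast y$, and applying $s_y^{-1}$ yields $u(x\ast^{-1}y)=u(x)\ast^{-1}y$; the computation for $d$ is identical.
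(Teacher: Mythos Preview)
Your proposal is correct and follows essentially the same route as the paper's proof: both derive the ``engine'' identities $ud(x)\ast x=x$ and $du(x)\ast x=x$ from axioms (1)--(2), use axioms (2)--(3) together to show $u$ and $d$ preserve $\ast$, and prove (4) by substituting $x\ast^{-1}y$ into axiom (2). The only cosmetic difference is in establishing bijectivity of $u$: the paper argues injectivity directly (if $u(x)=u(y)$ then $x=d(u(x)\ast u(x))=d(u(y)\ast u(y))=y$), whereas you package the same computation as constructing a two-sided inverse $d\circ t$; the underlying identity $d(u(x)\ast u(x))=x$ is identical in both.
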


\begin{proof}
To ensure that this article is self-contained, we provide a brief proof here. 
\begin{enumerate}
\item The first proposition follows directly from the definition.
\item According to the second of third conditions of GL-rack, we have 
\begin{center}
$u(x\ast y)=u(x)\ast y=u(x)\ast u(y)$.
\end{center}
Therefore $u$ defines a homomorphisms from $X$ to itself. The surjectivity of $u$ follows from the condition $ud(x\ast x)=x$. For the injectivity of $u$, assume $u(x)=u(y)$, then
\begin{center}
$x=du(x\ast x)=d(u(x)\ast u(x))=d(u(y)\ast u(y))=du(y\ast y)=y$,
\end{center}
which implies that $u$ is injective. This finishes the proof that $u$ is an automorphism of the underlying rack $(X, \ast)$. Similarly, one can show that $d$ is also an automorphism of $(X, \ast)$.
\item One observes that $ud(x)\ast x=u(d(x)\ast x)=u(d(x\ast x))=x$. The equality $du(x)\ast x=x$ can be proved in a comparable approach.
\item One observes that $u(x\ast^{-1}y)\ast y=u((x\ast^{-1}y)\ast y)=u(x)$, which means $u(x\ast^{-1}y)=u(x)\ast^{-1}y$. The other equality can be obtained in a similar way.
\end{enumerate}
\end{proof}

\begin{remark}
It is worth noting that for a given GL-rack $(X, \ast, u, d)$, actually the map $d$ is completely determined by the underlying rack $(X, \ast)$ and the map $u$. Actually, if we denote $\theta(x)=x\ast x$, then according to the definition of rack, the map $\theta: X\to X$ is invertible and $d=\theta^{-1}u^{-1}$ \cite[Proposition 3.11]{Ta-2025}.
\end{remark}

\begin{definition}
Let $(X, \ast_1 , u_1, d_1)$ and $(Y, \ast_2, u_2, d_2 )$ be two GL-racks. Then a \textit{GL-rack homomorphism} $\phi:(X, \ast_1 , u_1, d_1) \rightarrow (Y, \ast_2, u_2, d_2 )$ is a map $\phi: X \rightarrow Y$ such that $\phi(x\ast_1 y)=\phi(x)\ast_2\phi(y)$, $\phi u_1=u_2 \phi$ and $\phi d_1=d_2 \phi$.
\end{definition}

Since $s_x(y)=y\ast x$ is a rack automorphism of $(X, \ast)$, the second condition of GL-rack implies that $s_x$ is a GL-rack automorphism. Then all GL-rack and their morphisms form a category, more algebraic results of it can be found in \cite{Ta-2025}.

\begin{example}
Here we list some examples of GL-racks.
\begin{enumerate}
\item Let $(X, \ast)$ be a quandle, then $(X, \ast ,id_X, id_X)$ is a GL-rack, called the trivial GL-rack;
\item Let $G$ be a group, consider the conjugation quandle $(G, \ast)$ and set $u(x)=zx, d(x)=z^{-1}x$. Here $z$ is a central element of $G$, then $(G, \ast, u, d)$ is a GL-rack.
\item Let $(X, \ast_{\sigma})$ be a permutation rack such that $du=\sigma^{-1}$, then $(X, \ast_{\sigma}, u, d)$ is a GL-rack.
\end{enumerate}
\end{example}

Analogously to the fundamental quandle of topological knots, one can define the fundamental GL-rack of Legendrian knots. In order to introduce the fundamental GL-rack, we need the notion of free GL-rack. 

\begin{definition}\cite{karmakar2024generalisedlegendrianrackslegendrian}.
Let $X$ be a non-empty set, the universe of words generated by $X$ is a set $W(X)$ satisfying the following:
\begin{enumerate}
    \item $x\in W(X)$ for all $x \in X$;
    \item $x\ast y, x\ast^{-1}y, u(x), d(x)\in W(X) $ for all $x, y\in X$.
\end{enumerate}
The \textit{free GL-rack} $FGLR(X)$ is a set of equivalence classes of elements of $W(X)$ modulo the equivalence relation generated by the following relations:
    \begin{itemize}
        \item $(x\ast y) \ast^{-1}y\sim (x\ast^{-1} y) \ast y\sim x $
        \item $(x\ast y)\ast z \sim (x\ast z)\ast (y\ast z)$
        \item $u(d(x\ast x))\sim d(u(x \ast x))\sim x$
        \item $u(x\ast y) \sim u(x) \ast y, d(x\ast y) \sim d(x) \ast y$
        \item $x \ast u(y)\sim x \ast d(y)\sim x\ast y$
    \end{itemize}
\end{definition}

Let $D(K)$ be a front projection of a Legendrian knot $K$ and $X$ be the set of arcs of $D(K)$. Here an arc means a part of $D(K)$ between two undercrossing points, or an undercrossing point and a cusp, or two cusps. Each crossing point and each cusp provide a relation as shown in Figure \ref{fig5}. Then, the free GL-rack $(FGLR(D(K)), \ast, u, d)$ associated to the front diagram $D(K)$ is called the \textit{fundamental GL-rack} of $K$, denoted by $GLR_K$.

\tikzset{every picture/.style={line width=0.75pt}} 
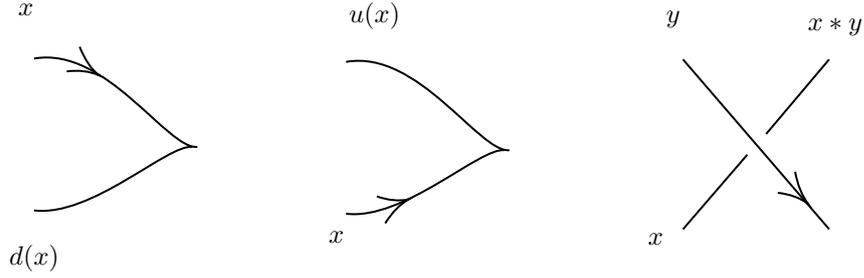
\begin{figure}[h]
\begin{tikzpicture}[x=0.75pt,y=0.75pt,yscale=-1,xscale=1]

\draw    (250.52,96.34) .. controls (286.61,89.97) and (316.98,142.18) .. (332.03,140.78) ;
\draw    (250.52,172.73) .. controls (279.36,176.2) and (317.31,139.27) .. (329.52,140.78) ;
\draw    (574.43,96.76) -- (647.41,182.17) ;
\draw    (647.41,96.76) -- (615.94,134.29) ;
\draw    (606.53,144.71) -- (574.43,182.21) ;
\draw    (406.27,98.01) .. controls (442.36,91.64) and (472.73,143.84) .. (487.78,142.45) ;
\draw    (406.27,174.4) .. controls (435.11,177.87) and (473.05,140.94) .. (485.27,142.45) ;
\draw   (273.28,90.42) .. controls (275.69,96.55) and (278.81,101.3) .. (282.65,104.67) .. controls (278.09,102.68) and (272.82,102.06) .. (266.82,102.81) ;
\draw   (421.62,165.67) .. controls (427.37,167.68) and (432.66,168.19) .. (437.47,167.19) .. controls (433.12,169.69) and (429.23,173.69) .. (425.81,179.2) ;
\draw   (630.42,153.27) .. controls (631.61,159.82) and (633.77,165.19) .. (636.88,169.38) .. controls (632.8,166.37) and (627.76,164.54) .. (621.76,163.88) ;

\draw (241.13,67.55) node [anchor=north west][inner sep=0.75pt]   [align=left] {$\displaystyle x$};
\draw (396.13,181.72) node [anchor=north west][inner sep=0.75pt]   [align=left] {$\displaystyle x$};
\draw (555.63,182.55) node [anchor=north west][inner sep=0.75pt]   [align=left] {$\displaystyle x$};
\draw (406.13,66.72) node [anchor=north west][inner sep=0.75pt]   [align=left] {$\displaystyle u( x)$};
\draw (236.66,188.38) node [anchor=north west][inner sep=0.75pt]   [align=left] {$\displaystyle d( x)$};
\draw (564.66,70.88) node [anchor=north west][inner sep=0.75pt]   [align=left] {$\displaystyle y$};
\draw (635.3,74.22) node [anchor=north west][inner sep=0.75pt]   [align=left] {$\displaystyle x\ast y$};
\end{tikzpicture}
\caption{Relations corresponding to cusps and crossing point}
\label{fig5}
\end{figure}

It turns out that $GLR_K$ does not depend on the choice of the front projection.

\begin{theorem}\cite{karmakar2024generalisedlegendrianrackslegendrian}
The fundamental GL-rack $GLR_K$ of a front projection $D(K)$ of an oriented Legendrian knot $K$ is preserved under the Legendrian Reidemeister moves.
\end{theorem}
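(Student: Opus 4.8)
The plan is to verify directly that the GL-rack presented by a front diagram does not change, up to isomorphism, under any of the three local moves of Figure~\ref{fig3}, nor under orientation-preserving planar isotopy of the front (which evidently affects neither the set of arcs nor the relations contributed by the crossings and cusps). Each such move alters the diagram only inside a small disk and fixes the arcs and relations outside it, so in each case it suffices to compare the two GL-rack presentations carried by the two local pictures --- presentations sharing the same boundary arcs --- and to pass between them by a chain of Tietze transformations; invariance of the fundamental GL-rack, which is glued together from these local contributions, follows.

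For the move with no cusps (the Legendrian analogue of Reidemeister~III), the maps $u$ and $d$ do not appear, and the argument is word for word the classical proof that the fundamental rack of a diagram is a knot invariant: the two presentations are identified by one use of self-distributivity $(x\ast y)\ast z=(x\ast z)\ast(y\ast z)$ (rack axiom~(2)), once the strand orientations have been recorded to decide whether each crossing gives $x\ast y$ or $x\ast^{-1}y$, and bijectivity of the maps $s_x:y\mapsto y\ast x$ (rack axiom~(1)) supplies the inverse arcs wherever they are needed.

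The other two moves involve cusps, and this is exactly what the GL-rack axioms were built for. In the move pushing a strand across a cusp, the cusp must be moved past crossings: GL-rack axiom~(2), $u(x\ast y)=u(x)\ast y$ and $d(x\ast y)=d(x)\ast y$, accomplishes this when the cusp lies on the understrand, while GL-rack axiom~(3), $x\ast u(y)=x\ast d(y)=x\ast y$, accomplishes it when the cusp lies on the overstrand --- it says the arc leaving the cusp colours an understrand exactly as the arc entering it does --- so that the two local pictures give the same relations and a crossing adjacent to a cusp imposes nothing new. In the move that creates or removes a pair of neighbouring cusps, across which the strand may cross itself, GL-rack axiom~(1) in the form $ud(x\ast x)=du(x\ast x)=x$ (equivalently $ud(x)\ast x=du(x)\ast x=x$) says that the composite of the two operations attached to those cusps returns the relevant arc to itself, and this is what permits the interior arcs and relations to be eliminated. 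A case-by-case pass through the pictures of Figure~\ref{fig3}, using these identities, then yields the desired Tietze transformations.

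The one genuinely laborious part --- a nuisance, not a real obstacle --- is the bookkeeping: since $GLR_K$ is defined for oriented knots and distinguishes up cusps from down cusps, each move has to be treated in all of its orientation variants, with care taken over which arcs are acted on by $u$ rather than by $d$ and over the signs of the crossings involved. No ingredient beyond the rack axioms and the three GL-rack axioms enters, because those axioms are precisely the relations forced by the moves, so the check is finite and elementary, although not short.
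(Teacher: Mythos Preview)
The paper does not actually supply a proof of this theorem: it is quoted as a result of Karmakar--Saraf--Singh and simply cited, with no argument given. Your proposal is the standard invariance proof --- checking each Legendrian Reidemeister move locally via Tietze transformations and observing that the GL-rack axioms are exactly the relations needed --- and it is correct; this is precisely the argument carried out in the cited reference, so there is nothing to compare.
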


Let $D(K)$ be a front projection of a Legendrian knot $K$, a coloring of $D(K)$ by a fixed finite GL-rack $(X, \ast, u, d)$ is a map from $GLR_K$ to $X$, such that at each crossing and each cusp the relations shown in Figure \ref{fig5} are satisfied. Obviously, there exists a one-to-one correspondence between the set of GL-rack homomorphisms from $GLR_K$ to $X$ and the set of colorings of $D(K)$ by $X$. The number of $(X, \ast, u, d)$ colorings of $D(K)$ gives rise to an integer-valued invariant of $K$. We denote it by Col$_X(K)$.

\subsection{Two examples}
In this subsection, we give two examples of Legendrian knots and their fundamental GL-racks.

\begin{example}\label{exmaple1}
Two Legendrian knots with the same topological knot type $5_1$ are shown in Figure \ref{fig6}, say $K_1$ (left) and $K_2$ (right). It is easy to find that the fundamental GL-rack $GLR_{K_1}$ is generated by $\{ x_1,x_2,x_3,x_4,x_5 \}$ with relations:
\[
\begin{cases}
    ud(x_1) \ast^{-1} x_4 = x_2\\
    d^2(x_2) \ast^{-1} x_5 = x_3\\
    ud(x_3) \ast^{-1} x_1 = x_4\\
    d^2(x_4) \ast^{-1} x_2 = x_5\\
    d^2(x_5) \ast^{-1} x_3 = x_1\\
\end{cases}
\]
and the fundamental GL-rack $GLR_{K_2}$ is generated by $\{ y_1,y_2,y_3,y_4,y_5,y_6,y_7,y_8 \}$ with relations:
\[
\begin{cases}
    u(y_1) \ast^{-1} y_7 = y_2\\
    y_2 \ast^{-1} y_4 = y_3\\
    d(y_3) \ast^{-1} y_7 = y_4\\
    y_4 \ast^{-1} y_1 = y_5\\
    d(y_5) \ast^{-1} y_7 = y_6\\
    y_6 \ast^{-1} y_4 = y_7\\
    d(y_7) \ast^{-1} y_1 = y_8\\
    y_8 \ast^{-1} y_5 = y_1
\end{cases}
\]
Now we consider the permutation GL-rack $(\mathbb{Z}_9, \ast, u, d)$ with $a\ast b=\sigma(a)$, $u=\sigma^{-1}$ and $d=id_{\mathbb{Z}_9}$, where $a, b\in\mathbb{Z}_9$ and $\sigma$ is the $9$-cycle. Choose an element $a\in\mathbb{Z}_9$, define $\phi: GLR_{K_2}\to(\mathbb{Z}_9, \ast, u, d)$ by setting $\phi(y_1)=a$. Then $\phi(y_i)=\sigma^{-i}(a)$ $(2\leq i\leq8)$ and $\phi$ is a GL-rack homomorphism. It follows that Col$_{\mathbb{Z}_9}(K_2)=9$. On the other hand, if there exists a GL-rack homomorphism $\psi: GLR_{K_1}\to(\mathbb{Z}_9, \ast, u, d)$ such that $\psi(x_1)=b$ for some $b\in\mathbb{Z}_9$. Then $\psi(x_1)=\psi(d^2(x_5)\ast^{-1} x_3)=\cdots=\sigma^{-7}(\psi(x_1))$, which is impossible. Therefore, we obtain Col$_{\mathbb{Z}_9}(K_1)=0$. It shows that $GLR_{K_1}$ is not isomorphic to $GLR_{K_2}$.

\tikzset{every picture/.style={line width=0.75pt}} 
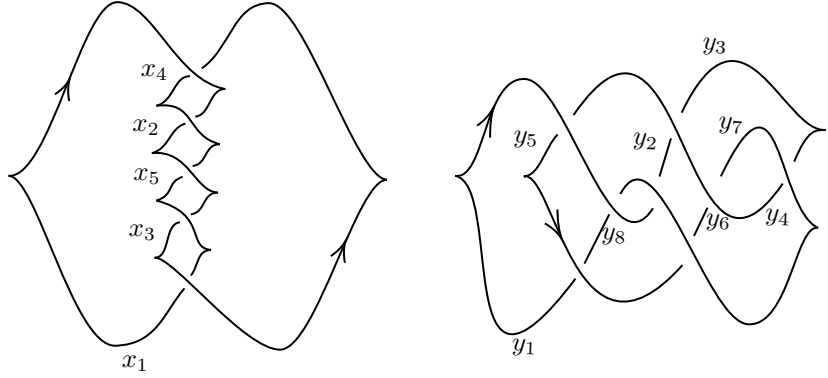
\begin{figure}
\begin{tikzpicture}[x=0.75pt,y=0.75pt,yscale=-1,xscale=1]

\draw    (275,175.43) .. controls (295.53,176.91) and (311.52,87.8) .. (329.34,88.07) .. controls (347.16,88.34) and (358.55,127.36) .. (383.22,131.75) ;
\draw    (275,175.43) .. controls (288.74,174.93) and (307.37,261.44) .. (328.32,260.87) .. controls (349.26,260.3) and (352.89,243.42) .. (362.85,232.01) ;
\draw    (463.83,177.41) .. controls (448.13,176.42) and (422.78,88.29) .. (404.96,88.57) .. controls (387.14,88.84) and (386.04,111.13) .. (371.48,121.38) ;
\draw    (463.83,177.41) .. controls (449.03,176.91) and (428.4,262.87) .. (410.51,262.85) .. controls (392.61,262.83) and (356.96,215.63) .. (347.9,216.62) ;
\draw    (365.56,125.3) .. controls (357.41,125.3) and (357.87,137.7) .. (348.81,140.19) ;
\draw    (348.81,140.19) .. controls (368.73,137.7) and (363.3,155.08) .. (380.51,159.54) ;
\draw    (383.22,131.75) .. controls (375.07,131.75) and (375.98,144.65) .. (368.28,145.65) ;
\draw    (363.3,149.12) .. controls (355.15,151.6) and (355.6,161.53) .. (346.55,164.01) ;
\draw    (346.55,164.01) .. controls (365.11,162.02) and (367.83,183.37) .. (379.6,183.86) ;
\draw    (380.51,159.54) .. controls (372.36,159.54) and (374.62,169.47) .. (366.92,170.46) ;
\draw    (361.94,175.92) .. controls (353.79,178.4) and (357.87,185.35) .. (348.81,187.83) ;
\draw    (348.81,187.83) .. controls (374.62,194.78) and (363.75,211.66) .. (375.98,212.65) ;
\draw    (379.6,183.86) .. controls (371.45,183.86) and (373.72,193.79) .. (366.02,194.78) ;
\draw    (360.58,198.75) .. controls (352.43,199.75) and (356.06,216.62) .. (347.9,216.62) ;
\draw    (375.98,212.65) .. controls (368.73,213.15) and (370.09,220.59) .. (366.02,225.55) ;
\draw    (497.92,175.43) .. controls (513.09,177.81) and (512.2,125.52) .. (532.89,126.72) .. controls (553.57,127.93) and (575.63,225.13) .. (596.7,191.72) ;
\draw    (679.14,201.63) .. controls (667.87,199.4) and (664.75,251.44) .. (644.07,250.08) .. controls (623.39,248.72) and (598.39,150) .. (580.43,184.58) ;
\draw    (630.59,175.7) .. controls (661.15,111.74) and (658.02,191.57) .. (679.14,201.63) ;
\draw    (557.04,144.86) .. controls (613.07,62.73) and (610.97,253.12) .. (661.49,179.31) ;
\draw    (611.02,143.64) .. controls (641.58,79.68) and (662.1,154.13) .. (682.94,152.34) ;
\draw    (682.94,152.34) .. controls (678.15,152.55) and (674.29,151.07) .. (667.19,168.28) ;
\draw    (623.91,191.45) -- (617.17,205.67) ;
\draw    (605.69,156.54) -- (599.93,175.77) ;
\draw    (497.92,175.43) .. controls (522.67,179.97) and (495.44,311.02) .. (558.02,227.12) ;
\draw    (549.12,154.65) .. controls (542.75,161.01) and (540.3,175.25) .. (532.19,175.74) ;
\draw    (532.19,175.74) .. controls (547.46,171.74) and (557.82,280.92) .. (611.39,220.33) ;
\draw    (574.9,194.76) -- (562.49,219.56) ;
\draw   (298.2,134.4) .. controls (301.41,132.27) and (303.82,129.77) .. (305.46,126.88) .. controls (304.61,130.15) and (304.54,133.79) .. (305.26,137.82) ;
\draw   (435.86,215.79) .. controls (439.06,213.67) and (441.48,211.17) .. (443.12,208.28) .. controls (442.27,211.55) and (442.2,215.19) .. (442.92,219.22) ;
\draw   (508.89,150.17) .. controls (512.37,147.81) and (515.11,144.78) .. (517.13,141.07) .. controls (515.84,145.45) and (515.27,150.5) .. (515.44,156.22) ;
\draw   (549.05,192.35) .. controls (549.04,198.09) and (549.73,203.05) .. (551.14,207.22) .. controls (549.03,203.83) and (546.21,201.21) .. (542.68,199.37) ;

\draw (330.01,264.41) node [anchor=north west][inner sep=0.75pt]   [align=left] {$\displaystyle x_{1}$};
\draw (335.44,145.78) node [anchor=north west][inner sep=0.75pt]   [align=left] {$\displaystyle x_{2}$};
\draw (333.63,198.39) node [anchor=north west][inner sep=0.75pt]   [align=left] {$\displaystyle x_{3}$};
\draw (339.52,117.99) node [anchor=north west][inner sep=0.75pt]   [align=left] {$\displaystyle x_{4}$};
\draw (335.9,170.1) node [anchor=north west][inner sep=0.75pt]   [align=left] {$\displaystyle x_{5}$};
\draw (525.03,255.68) node [anchor=north west][inner sep=0.75pt]  [rotate=-2.41] [align=left] {$\displaystyle y_{1}$};
\draw (585.47,152.18) node [anchor=north west][inner sep=0.75pt]  [rotate=-2.41] [align=left] {$\displaystyle y_{2}$};
\draw (620.15,104.18) node [anchor=north west][inner sep=0.75pt]  [rotate=-2.41] [align=left] {$\displaystyle y_{3}$};
\draw (651.59,190.53) node [anchor=north west][inner sep=0.75pt]  [rotate=-2.41] [align=left] {$\displaystyle y_{4}$};
\draw (525.84,150.98) node [anchor=north west][inner sep=0.75pt]  [rotate=-2.41] [align=left] {$\displaystyle y_{5}$};
\draw (621.87,192.63) node [anchor=north west][inner sep=0.75pt]  [rotate=-2.41] [align=left] {$\displaystyle y_{6}$};
\draw (628.28,144.11) node [anchor=north west][inner sep=0.75pt]  [rotate=-2.41] [align=left] {$\displaystyle y_{7}$};
\draw (569.83,199.8) node [anchor=north west][inner sep=0.75pt]  [rotate=-2.41] [align=left] {$\displaystyle y_{8}$};
\end{tikzpicture}
\caption{Two Legendrian knots with topological knot type $5_1$}
\label{fig6}
\end{figure}
\end{example}

\begin{example}\label{exmaple2}
Two Chekanov-Eliashberg knots are shown in Figure \ref{fig7}, say $K_3$ (left) and $K_4$ (right). It is easy to find that $tb(K_3)=tb(K_4)=1$ and $rot(K_3)=rot(K_3)=0$. And the fundamental GL-rack $GLR_{K_3}$ is generated by $\{ x_1,x_2,x_3,x_4,x_5,x_6 \}$ with relations:
\[
\begin{cases}
    u^2d(x_1) \ast x_4 = x_2\\
    x_2 \ast x_1 = x_3\\
    d^2(x_3) \ast x_6 = x_4\\
    u^2d(x_4) \ast x_1 = x_5\\
    d(x_5) \ast x_4 = x_6\\
    u(x_6) \ast x_3 = x_1
\end{cases}
\]
and the fundamental GL-rack $GLR_{K_4}$ is generated by $\{ y_1,y_2,y_3,y_4,y_5,y_6 \}$ with relations:
\[
\begin{cases}
    u^2d(y_1) \ast y_4 = y_2\\
    y_2 \ast y_1 = y_3\\
    d(y_3) \ast y_6 = y_4\\
    u^2(y_4) \ast y_1 = y_5\\
    d(y_5) \ast y_4 = y_6\\
    ud^2(y_6) \ast y_3 = y_1
\end{cases}
\]
Let $f$ be a map from $GLR_{K_3}$ to $GLR_{K_4}$ defined as below:
\begin{center}
$f(x_i)=y_i (i=1, 2, 3), f(x_4)=d(y_4), f(x_5)=d^2(y_5), f(x_6)=d^2(x_6).$
\end{center}
One can show that $f$ is an isomorphism and hence these two Legendrian knots have the isomorphic fundamental GL-racks. However, it is well known that these two knots are non Legendrian isotopic since they have distinct linearized contact homologies \cite{Chekanov}.

\tikzset{every picture/.style={line width=0.75pt}} 
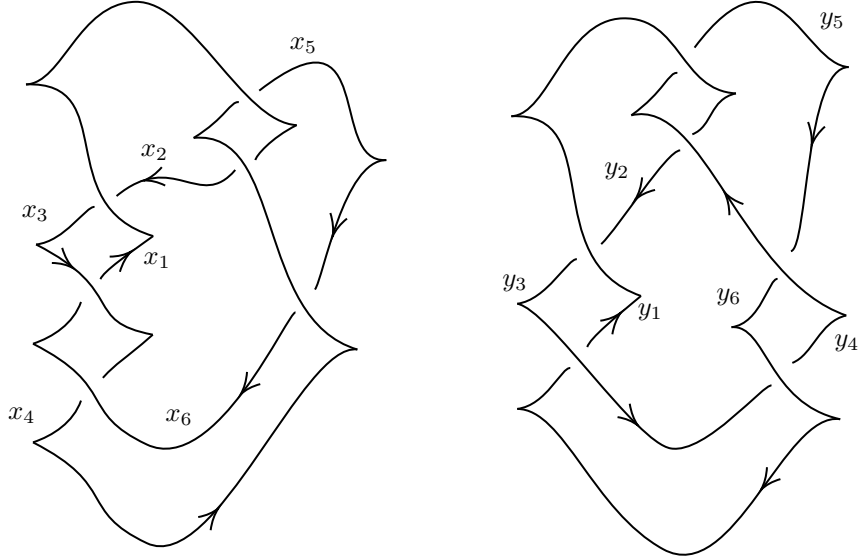
\begin{figure}[h]
\begin{tikzpicture}[x=0.75pt,y=0.75pt,yscale=-1,xscale=1]

\draw    (314.74,118.75) .. controls (333.14,118.13) and (340.85,77.82) .. (369.35,77.2) .. controls (397.84,76.58) and (422.57,137.98) .. (449.88,139.22) ;
\draw    (314.74,118.75) .. controls (362.22,116.89) and (326.61,179.52) .. (378.25,195.03) ;
\draw    (378.25,195.03) .. controls (366.18,203.71) and (353.12,213.01) .. (351.93,216.73) ;
\draw    (319.78,199.26) .. controls (365.57,222.83) and (342.44,237.81) .. (378.05,244.63) ;
\draw    (378.05,244.63) .. controls (362.62,258.28) and (360.53,258.14) .. (353.12,266.34) ;
\draw    (318.24,249.18) .. controls (328.28,245.83) and (338.41,239.03) .. (342.21,228.17) ;
\draw    (318.24,249.18) .. controls (360.77,271.07) and (340.55,283.48) .. (375.06,299.87) .. controls (409.58,316.26) and (445.94,232.16) .. (449.29,233.47) ;
\draw    (318.24,298.79) .. controls (328.28,295.44) and (338.41,288.64) .. (342.21,277.78) ;
\draw    (318.24,298.79) .. controls (360.77,320.68) and (340.55,333.09) .. (375.06,349.48) .. controls (409.58,365.87) and (452.85,248.36) .. (480.15,252.08) ;
\draw    (398.43,145.65) .. controls (444.14,141.31) and (428.51,236.57) .. (480.15,252.08) ;
\draw    (449.88,139.22) .. controls (437.02,145.65) and (429.3,154.95) .. (429.3,156.81) ;
\draw    (359.95,174.9) .. controls (388.34,151.23) and (401.4,182.86) .. (419.21,161.77) ;
\draw    (398.43,145.65) .. controls (414.46,135.73) and (416.83,127.05) .. (420.99,127.67) ;
\draw    (431.18,121.88) .. controls (490.54,75.37) and (464.92,158.67) .. (494.6,156.81) ;
\draw    (458.98,221.92) .. controls (463.14,216.34) and (473.23,156.81) .. (494.6,156.81) ;
\draw    (557.01,134.75) .. controls (575.76,134.02) and (583.63,86.25) .. (612.67,85.51) .. controls (641.71,84.78) and (641.31,121.8) .. (669.14,123.27) ;
\draw    (557.01,134.75) .. controls (605.41,132.55) and (569.11,206.77) .. (621.74,225.15) ;
\draw    (621.74,225.15) .. controls (609.44,235.44) and (596.13,246.46) .. (594.92,250.87) ;
\draw    (666.92,240.86) .. controls (684.47,239.39) and (684.87,279.07) .. (721.17,287.16) ;
\draw    (559.97,229.06) .. controls (575.96,232.04) and (619.12,298.91) .. (636.67,301.85) .. controls (654.22,304.79) and (683.48,268.7) .. (686.89,270.25) ;
\draw    (559.97,281.97) .. controls (583.22,282.01) and (597.66,333.14) .. (632.84,352.56) .. controls (668.02,371.99) and (693.34,282.75) .. (721.17,287.16) ;
\draw    (616.9,134.29) .. controls (653.41,125.48) and (671.56,216.6) .. (724.2,234.98) ;
\draw    (697.27,258.82) .. controls (707.51,254.85) and (710.89,237.92) .. (724.2,234.98) ;
\draw    (601.88,199.09) .. controls (610.89,190.36) and (634.71,151.83) .. (641.31,151.93) ;
\draw    (616.9,134.29) .. controls (633.24,122.54) and (635.66,112.25) .. (639.9,112.98) ;
\draw    (648.47,100.24) .. controls (690.32,43.9) and (702.42,108.57) .. (725.41,110.04) ;
\draw    (696.57,202.64) .. controls (705.04,201.17) and (703.63,110.04) .. (725.41,110.04) ;
\draw    (319.78,199.26) .. controls (334.13,197.74) and (344.81,179.76) .. (348.97,180.38) ;
\draw    (559.97,229.06) .. controls (574.59,227.26) and (585.48,205.95) .. (589.72,206.69) ;
\draw    (559.97,281.97) .. controls (574.59,280.18) and (582.22,260.7) .. (586.45,261.43) ;
\draw    (666.92,240.86) .. controls (681.54,239.06) and (685.68,215.87) .. (689.91,216.6) ;
\draw    (647.36,143.85) .. controls (657.6,139.88) and (655.83,126.21) .. (669.14,123.27) ;
\draw   (356.52,205.36) .. controls (360.64,205.35) and (364.2,204.5) .. (367.19,202.8) .. controls (364.77,205.35) and (362.91,208.74) .. (361.62,213) ;
\draw   (384.24,169.62) .. controls (380.49,167.75) and (376.93,166.92) .. (373.56,167.1) .. controls (376.74,165.88) and (379.74,163.63) .. (382.55,160.37) ;
\draw   (332.82,200.96) .. controls (334.07,205.22) and (335.91,208.64) .. (338.31,211.21) .. controls (335.34,209.48) and (331.78,208.6) .. (327.66,208.56) ;
\draw   (400.71,336.37) .. controls (404.78,335.62) and (408.15,334.13) .. (410.84,331.92) .. controls (408.84,334.87) and (407.53,338.56) .. (406.92,342.98) ;
\draw   (474.98,186.24) .. controls (471.67,188.92) and (469.29,191.9) .. (467.82,195.2) .. controls (468.36,191.59) and (467.98,187.66) .. (466.68,183.41) ;
\draw   (431.69,269.49) .. controls (427.69,270.54) and (424.41,272.27) .. (421.87,274.69) .. controls (423.67,271.59) and (424.75,267.82) .. (425.08,263.36) ;
\draw   (601.43,237.62) .. controls (605.52,237.01) and (608.93,235.64) .. (611.68,233.52) .. controls (609.6,236.4) and (608.19,240.04) .. (607.44,244.44) ;
\draw   (628.76,171.82) .. controls (624.73,172.76) and (621.42,174.4) .. (618.82,176.74) .. controls (620.7,173.7) and (621.86,169.96) .. (622.3,165.51) ;
\draw   (616.12,279.36) .. controls (616.82,283.77) and (618.18,287.43) .. (620.23,290.34) .. controls (617.5,288.18) and (614.11,286.75) .. (610.03,286.09) ;
\draw   (665.86,184.54) .. controls (665.82,180.07) and (664.99,176.21) .. (663.39,172.99) .. controls (665.77,175.59) and (668.92,177.57) .. (672.85,178.93) ;
\draw   (713.56,139.97) .. controls (710.4,142.83) and (708.18,145.96) .. (706.89,149.35) .. controls (707.24,145.71) and (706.66,141.8) .. (705.13,137.65) ;
\draw   (691.33,317.28) .. controls (687.35,318.42) and (684.1,320.22) .. (681.6,322.68) .. controls (683.36,319.56) and (684.36,315.75) .. (684.62,311.29) ;

\draw (371.87,201.7) node [anchor=north west][inner sep=0.75pt]   [align=left] {$\displaystyle x_{1}$};
\draw (370.64,147.47) node [anchor=north west][inner sep=0.75pt]   [align=left] {$\displaystyle x_{2}$};
\draw (311.1,178.42) node [anchor=north west][inner sep=0.75pt]   [align=left] {$\displaystyle x_{3}$};
\draw (304.34,279.6) node [anchor=north west][inner sep=0.75pt]   [align=left] {$\displaystyle x_{4}$};
\draw (444.92,93.55) node [anchor=north west][inner sep=0.75pt]   [align=left] {$\displaystyle x_{5}$};
\draw (382.92,280.6) node [anchor=north west][inner sep=0.75pt]   [align=left] {$\displaystyle x_{6}$};
\draw (618.64,227.66) node [anchor=north west][inner sep=0.75pt]   [align=left] {$\displaystyle y_{1}$};
\draw (602.06,156.45) node [anchor=north west][inner sep=0.75pt]   [align=left] {$\displaystyle y_{2}$};
\draw (551.11,213.05) node [anchor=north west][inner sep=0.75pt]   [align=left] {$\displaystyle y_{3}$};
\draw (716.86,242.97) node [anchor=north west][inner sep=0.75pt]   [align=left] {$\displaystyle y_{4}$};
\draw (709.49,80.9) node [anchor=north west][inner sep=0.75pt]   [align=left] {$\displaystyle y_{5}$};
\draw (657.93,220.03) node [anchor=north west][inner sep=0.75pt]   [align=left] {$\displaystyle y_{6}$};
\end{tikzpicture}
\caption{Chekanov-Eliashberg knots}
\label{fig7}
\end{figure}
\end{example}

\begin{remark}
A natural idea to promote GL-rack is to distinguish between the left up (dwon) cusp and the right up (down) cusp. Following this idea, recently Kimura introduced the notion of fundamental 4-Legendrian rack. However, the fundamental 4-Legendrian racks of the two Legendrian knots in Example \ref{exmaple2} are still isomorphic.
\end{remark}

\section{The proof of the main theorem}\label{section3}
For a Legendrian knot, we can obtain its fundamental GL rack as shown in Figure \ref{fig8}. At the two crossings, we have two relations $r_{i-1}:u^{p_{i-1}} d^{q_{i-1}} (x_{i-1}) \ast^{\epsilon_{i-1}} x_{k_{i-1}}=x_i$ and $r_i: u^{p_{i}} d^{q_{i}} (x_i) \ast^{\epsilon_i} x_{k_{i}}=x_{i+1}$, where $\epsilon_{i-1}$ and $\epsilon_i$ depend on the sign of the two crossings.

\tikzset{every picture/.style={line width=0.75pt}} 
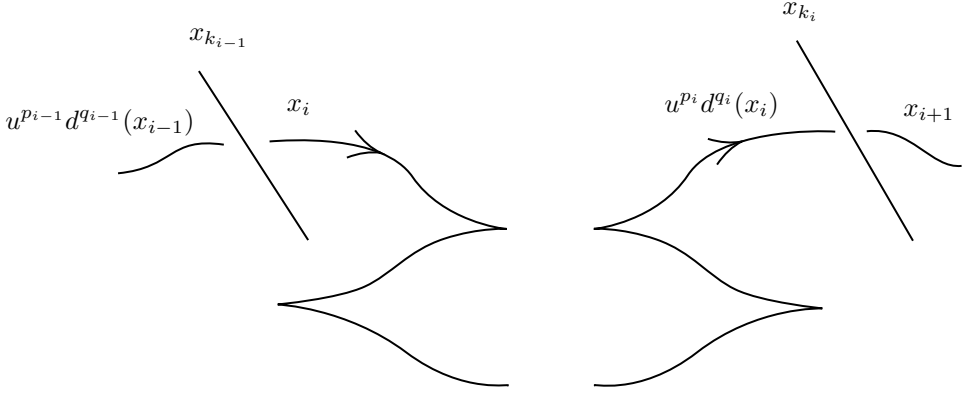
\begin{figure}
\begin{tikzpicture}[x=0.75pt,y=0.75pt,yscale=-1,xscale=1]

\draw    (193.83,106.78) .. controls (202.83,106.12) and (250.23,102.66) .. (265.23,124.66) .. controls (280.23,146.66) and (309.9,150.77) .. (311.9,150.77) .. controls (313.9,150.77) and (298.83,149.78) .. (280.83,155.78) .. controls (262.83,161.78) and (254.79,173.58) .. (241.33,179.78) .. controls (227.88,185.99) and (197.9,188.77) .. (197.9,188.77) .. controls (197.9,188.77) and (232.27,190.4) .. (261.9,212.77) .. controls (291.53,235.13) and (315.8,228) .. (312.47,229.67) ;
\draw    (476.2,101.87) .. controls (467.2,101.14) and (417.76,100.87) .. (402.67,123.72) .. controls (387.58,146.57) and (357.9,150.74) .. (355.9,150.73) .. controls (353.9,150.73) and (368.97,149.76) .. (386.95,156.08) .. controls (404.92,162.39) and (412.92,174.71) .. (426.35,181.22) .. controls (439.78,187.74) and (469.75,190.75) .. (469.75,190.75) .. controls (469.75,190.75) and (436.99,189.66) .. (407.27,212.83) .. controls (377.54,236.01) and (353.14,227.88) .. (356.47,229.63) ;
\draw   (236.54,101.21) .. controls (240.35,106.5) and (244.63,110.28) .. (249.37,112.52) .. controls (244.17,111.8) and (238.53,112.59) .. (232.43,114.92) ;
\draw   (412.63,105.47) .. controls (418.78,107.34) and (424.43,107.78) .. (429.57,106.79) .. controls (424.94,109.23) and (420.81,113.11) .. (417.2,118.43) ;
\draw  [line width=3] [line join = round][line cap = round] (321.4,229.07) .. controls (321.4,229.07) and (321.4,229.07) .. (321.4,229.07) ;
\draw  [line width=3] [line join = round][line cap = round] (332.8,229.1) .. controls (332.8,229.1) and (332.8,229.1) .. (332.8,229.1) ;
\draw  [line width=3] [line join = round][line cap = round] (343.8,229.1) .. controls (343.8,229.1) and (343.8,229.1) .. (343.8,229.1) ;
\draw    (158.56,71.39) -- (213.11,156.39) ;
\draw    (457,56.06) -- (515,156.72) ;
\draw    (491.93,101.67) .. controls (514.27,99.25) and (524.07,120.67) .. (539.27,119.07) ;
\draw    (118.2,122.8) .. controls (146.6,120) and (143.27,104.4) .. (171,108.4) ;

\draw (60.6,88.8) node [anchor=north west][inner sep=0.75pt]   [align=left] {$\displaystyle u^{p_{i-1}} d^{q_{i-1}}(x_{i-1})$};
\draw (152.13,48.67) node [anchor=north west][inner sep=0.75pt]   [align=left] {$\displaystyle x_{k_{i-1}}$};
\draw (201,84.4) node [anchor=north west][inner sep=0.75pt]   [align=left] {$\displaystyle x_{i}$};
\draw (389.55,80.31) node [anchor=north west][inner sep=0.75pt]  [rotate=-0.15,xslant=0.01] [align=left] {$\displaystyle u^{p_{i}} d^{q_{i}}( x_{i})$};
\draw (448.33,35.53) node [anchor=north west][inner sep=0.75pt]   [align=left] {$\displaystyle x_{k_{i}}$};
\draw (508.73,86.8) node [anchor=north west][inner sep=0.75pt]   [align=left] {$\displaystyle x_{i+1}$};
\end{tikzpicture}
\caption{Reading fundamental GL-rack from front projection}
\label{fig8}
\end{figure}

Now we give the proof of Theorem \ref{theorem1}.
\begin{proof}
Consider a front projection of $K_1$ and $K_2$, respectively, say $D_1$ and $D_2$. We assume that the arc collection of $D_1$ is $\{x_1, x_2, \dots ,x_m\}$, the arc collection of $D_2$ is $\{y_1, y_2, \dots ,y_n\}$. Then their fundamental GL-racks are 
    \[ GLR_{K_1} = \langle x_1, x_2, \dots, x_m \mid r_1, r_2, \dots, r_m \rangle \] and 
    \[ GLR_{K_2} = \langle y_1, y_2, \dots, y_n \mid \widetilde{r}_1, \widetilde{r}_2, \cdots, \widetilde{r}_n \rangle \] respectively, where $r_i$ $(1\leq i\leq m)$ and $\widetilde{r}_j$ $(1\leq j\leq n)$ are the relations derived from crossings of $D_1$ and $D_2$. The relation $r_i$ has the representation  \(u^{p_i} d^{q_i} (x_i) \ast^{\epsilon_i} x_{k_i} =x_{i+1} \) for \(1 \leq i \leq m\) and $\widetilde{r}_j$ has the representation  \( u^{\widetilde{p_j}} d^{\widetilde{q_j}}(y_j) \ast^{\widetilde {\epsilon_j}} y_{k_j} = y_{j+1} \) for \(1 \leq j \leq n\), where $x_{m+1} = x_1$, $y_{n+1} = y_1$, $ p_i ,q_i, \widetilde{p_j}, \widetilde{q_j} \in \mathbb{Z}_{\geq0}$ and $ \epsilon _i, \widetilde{\epsilon_j} \in \{1,-1\}$. We denote $ \displaystyle \sum_i p_i,\sum_i q_i, \sum_i \epsilon_i $ by $p,q,\omega $, respectively, and $ \displaystyle \sum_j \widetilde{p_j},\sum_j \widetilde{q_j} \sum_j \widetilde{\epsilon_j} $ by $\widetilde{p}, \widetilde{q}, \widetilde{\omega} $, respectively. 
    
    Consider the permutation GL-rack $(\mathbb{Z}_k, \ast, u, d)$ with $a\ast b=\sigma(a)$ and $ud=\sigma^{-1}$, where $a, b\in\mathbb{Z}_k$ and $\sigma$ is the $k$-cycle. Let $\phi: GLR_{K_1}\to(\mathbb{Z}_k, \ast, u, d)$ be a homomorphism from the fundamental GL-rack to $(\mathbb{Z}_k, \ast, u, d)$. Since $GLR_{K_1}$ and $GLR_{K_2}$ are isomorphic, then $\phi$ induces a homomorphism $\psi: GLR_{K_2}\to(\mathbb{Z}_k, \ast, u, d)$ from the fundamental GL-racks of $K_2$ to the permutation GL-rack $(\mathbb{Z}_k, \ast, u, d)$. Conversely, if $\psi$ is defined, then it also induces a homomorphism $\phi: GLR_{K_1}\to(\mathbb{Z}_k, \ast, u, d)$.
    
    For $GLR_{K_1}$, we can see that \(x_1 =u^{p_n} d^{q_n} (x_n) \ast^{\epsilon_n} x_{k_n} =u^{p_n+p_{n-1}} d^{q_n+q_{n-1}} ((x_{n-1}) \ast^{\epsilon_{n-1}} x_{k_{n-1}}) \ast^{\epsilon_n} x_{k_n} = \dots = u^p d^q (((x_1) \ast^{\epsilon_1} x_{k_1}) \ast^{\epsilon_2} \dots ) \ast ^{\epsilon_n} x_{k_n} \), then \( \phi(x_1) = u^p d^q \sigma^{\omega} (\phi (x_1)) \). Note that here the equalities $u(x\ast y)=u(x)\ast y$ and $ud(x)=du(x)$ are essentially used. Similarly, one obtains \( \psi (y_1) = u^{\widetilde{p}} d^{\widetilde{q}} \sigma^{\widetilde{\omega}} (\psi (y_1)) \).

    Now we choose $u = \sigma^{-1}$ and $d = id_{\mathbb{Z}_k}$, which follows that \( \phi (x_1) = \sigma^{\omega - p} (\phi (x_1)) \) and \( \psi (y_1) = \sigma^{\widetilde{\omega} - \widetilde{p}} (\psi (y_1)) \). We claim that \( \left| \omega -p \right| = \left| \widetilde{\omega} - \widetilde{p} \right| \). If not, assume \( \left| \omega -p \right| < \left| \widetilde{\omega} - \widetilde{p} \right| \), then there are two possibilities \( \left| \omega -p \right| =0 \) or \( \left| \omega -p \right| > 0 \).
    \begin{enumerate}
    \item If \( \left| \omega -p \right|=0 \), we set \( k >\left| \widetilde{\omega} - \widetilde{p} \right| \) and $\phi(x_1)=a$ for some element $a\in\mathbb{Z}_k$. Then $\phi(x_i)$ $(2\leq i\leq m)$ is completely determined by $\phi(x_1)$ and $r_1, \ldots, r_{m-1}$. The key point here is $\phi(x_1)=\sigma^{\omega-p}(\phi(x_1))$ since $\omega-p=0$, therefore the last relation $r_m$ is also satisfied. Hence $\phi$ defines a homomorphism from $GLR_{K_1}$ to $(\mathbb{Z}_k, \ast, u, d)$. Now we assume \( \psi (y_1) =b \in \mathbb{Z}_k \), then \( \sigma^{\widetilde{\omega} - \widetilde{p}}(\psi (y_1)) = \sigma^{\widetilde{\omega} - \widetilde{p}} (b) \ne b = \psi (y_1) \), which contradicts with the fact $\psi(y_1)=\sigma^{\widetilde{\omega}-\widetilde{p}}(\psi(y_1))$.
    \item If \( \left| \omega -p \right| > 0 \), we set \(k =\left| \widetilde{\omega} - \widetilde{p} \right| \) and $\psi(y_1)=b\in\mathbb{Z}_k$. Similar as above, now $\psi(y_j)$ $(2\leq j\leq n)$ is completely determined by $\psi(y_1)$ and $\widetilde{r}_1, \ldots, \widetilde{r}_{n-1}$ and $\psi$ defines a homomorphism from $GLR_{K_2}$ to $(\mathbb{Z}_k, \ast, u, d)$. Thus $\psi$ induces a homomorphism $\phi: GLR_{K_1}\to(\mathbb{Z}_k, \ast, u, d)$. Assume \( \phi(x_1) = a \in \mathbb{Z}_k \), then \( \sigma^{\omega - p} (\phi (x_1)) \ne a \) since \( \left| \omega -p \right| < \left| \widetilde{\omega} - \widetilde{p} \right| = k \), which contradicts with the fact that $\phi(x_1)=\sigma^{\omega-p}\phi(x_1)$.
    \end{enumerate}
    
    Now we have shown that \( \left| \omega -p \right| = \left| \widetilde{\omega} - \widetilde{p} \right| \). Similarly, choosing $u =id_{\mathbb{Z}_k}$ and $d = \sigma^{-1}$, we can obtain \( \left| \omega -q \right| = \left| \widetilde{\omega} - \widetilde{q} \right| \), and choosing $u = \sigma^{-2}$ and $d = \sigma$, we can obtain \( \left| \omega -2p +q \right| = \left| \widetilde{\omega} - 2\widetilde{p} + \widetilde{q} \right| \). Based on these, there are only two possibilities:
    \begin{enumerate}
    \item \( \omega - p = \widetilde{\omega} - \widetilde{p} \) and \( \omega - q = \widetilde{\omega} - \widetilde{q} \).
    \item \( \omega - p + \widetilde{\omega} - \widetilde{p} =0 \) and \( \omega - q + \widetilde{\omega} - \widetilde{q} =0 \).
    \end{enumerate}
    
Recall that the Thurston-Bennequin number and rotation number can be represented by the writhe and the number of cusps. For case $(1)$, we have \( \omega - p + \omega - q = \widetilde{\omega} - \widetilde{p} + \widetilde{\omega} - \widetilde{q} \) and \( p - q = \widetilde{p} - \widetilde{q} \), then \( tb (K_1) = tb (K_2)\) and \( rot(K_1) = rot(K_2) \). For case $(2)$, we have \( 2\omega - (p + q) + 2\widetilde{\omega} - (\widetilde{p} +\widetilde{q}) =0 \) and \( (q - p) + (\widetilde{q} - \widetilde{p}) = 0 \), then \( tb(K_1) + tb(K_2) =0 \) and \( rot(K_1) + rot(K_2) = 0 \). The proof is finished.
\end{proof}

Given a knot type $\mathcal{K}$, for all Legendrian knots $K \in \mathcal{K}$, according to the Bennequin's inequality \( \frac{1}{2}(tb(K)+\left|rot(K)\right|)+1\leq g_s (\mathcal{K}) \), where \( g_s (\mathcal{K}) \) is the slice genus of \( \mathcal{K} \). We obtain that if $K_1, K_2\in\mathcal{K}$ have isomorphic fundamental GL-racks, if $\mathcal{K}$ is slice, then their Thurston-Bennequin numbers are both negative. Thus, they have the same Thurston-Bennequin number and rotation number.

\begin{corollary}
Let $K_1, K_2$ be two Legendrian knots with isomorphic fundamental GL-rack, if one of the knot types is a slice knot, then \( tb(K_1) = tb(K_2) \) and \( rot(K_1) = rot(K_2) \).
\end{corollary}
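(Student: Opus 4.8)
The plan is to derive the corollary from Theorem~\ref{theorem1} by using the Bennequin-type inequality recalled just above to exclude the ``opposite'' alternative. First I would note that an isomorphism $GLR_{K_1}\cong GLR_{K_2}$ in particular identifies the underlying racks of the two fundamental GL-racks; since the fundamental rack of a knot (equivalently, its knot quandle) determines the knot up to mirror image, this forces $k(K_1)$ and $k(K_2)$ to be equal or mirror images of one another. Because the slice genus is unchanged under mirroring, the hypothesis that one of these two knot types is slice then implies that both have slice genus $0$.

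Next I would apply the inequality $\tfrac12\bigl(tb(K)+|rot(K)|\bigr)+1\le g_s(\mathcal K)$ to $K=K_1$ and to $K=K_2$. With slice genus $0$ this gives $tb(K_i)+|rot(K_i)|\le -2$, and in particular $tb(K_1)<0$ and $tb(K_2)<0$.

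Now Theorem~\ref{theorem1} leaves exactly two possibilities: either $tb(K_1)=tb(K_2)$ and $rot(K_1)=rot(K_2)$, or $tb(K_1)=-tb(K_2)$ and $rot(K_1)=-rot(K_2)$. The second possibility cannot occur, since it would force $tb(K_1)$ and $tb(K_2)$ to have opposite signs while both are strictly negative. Hence the first possibility holds, which is exactly the assertion of the corollary.

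The only step that is not purely formal — and hence the one I would expect to require the most care — is the first: one must be sure that an isomorphism of fundamental GL-racks transports the slice hypothesis from ``one of the two knot types'' to both, which is why the statement is phrased in terms of knot types rather than a fixed $\mathcal{K}$. If one wishes to avoid appealing to completeness of the knot quandle, it suffices to assume that $K_1$ and $K_2$ already have the same underlying (slice) knot type, after which only the last two steps are needed and the argument becomes entirely elementary given Theorem~\ref{theorem1}.
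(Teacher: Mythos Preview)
Your proposal is correct and follows essentially the same route as the paper: recover the knot quandle from the fundamental GL-rack (the paper phrases this as ``setting $u=d=\mathrm{id}$''), invoke Joyce--Matveev to transport sliceness to both knot types, and then use the Bennequin inequality to force both Thurston--Bennequin numbers negative, ruling out the ``opposite'' alternative of Theorem~\ref{theorem1}. One small caution on wording: the \emph{underlying rack} of $GLR_K$ is not literally the knot quandle $Q_K$ (the GL-rack has extra generators coming from cusp points), but rather surjects onto it after imposing $u=d=\mathrm{id}$; since a GL-rack isomorphism intertwines the $u,d$ maps, it descends to an isomorphism of these quotients, so your conclusion stands.
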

\begin{proof}
Recall that the knot quandle can be recovered from the generalized GL-rack by setting $u=d=id$. It is well know that if two topological knots $\mathcal{K}_1$ and $\mathcal{K}_2$ have isomorphic knot quandles, then either $\mathcal{K}_1=\mathcal{K}_2$ or $\mathcal{K}_1=rm(\mathcal{K}_2)$ \cite{Joyce-1982,Matveev-1984}. Here $rm(\mathcal{K}_2)$ denotes the mirror image of the knot obtained from $\mathcal{K}_2$ by reversing the orientation. Therefore, if $\mathcal{K}_1$ is slice then $\mathcal{K}_2$ must also be slice. The result follows from the discussion above.
\end{proof}

We end this paper with the following question.

\begin{question}
Can we find two Legendrian knots with the same classical invariants but distinct fundamental GL-racks?
\end{question}

\bibliographystyle{plain}
\bibliography{ref}

\end{document}